\let\reftagform@=\tagform@
\def\tagform@#1{\maketag@@@{(\ignorespaces\textcolor{black}{#1}\unskip\@@italiccorr)}}
\newcommand{\iref}[1]{\textup{\reftagform@{\tcr{\ref{#1}}}}}
\begin{document}

\title{Local Linear Convergence of Forward--Backward \\ under Partial Smoothness\thanks{This work has been partly supported by the European Research Council (ERC project SIGMA-Vision). JF was partly supported by Institut Universitaire de France.}}
\author{
	Jingwei Liang\thanks{Jingwei Liang, Jalal Fadili \\
	\hspace*{1.8em}GREYC, CNRS-ENSICAEN-Universit\'{e} de Caen, 
	E-mail: \{Jingwei.Liang, Jalal.Fadili\}@greyc.ensiacen.fr}
	, Jalal Fadili\samethanks 
	, Gabriel Peyr\'{e}\thanks{Gabriel Peyr\'{e} \\
	\hspace*{1.8em}CNRS, CEREMADE, Universit\'{e} Paris-Dauphine, 
	E-mail: Gabriel.Peyre@ceremade.dauphine.fr}
}
\date{}
\maketitle

\begin{abstract}
In this paper, we consider the Forward--Backward proximal splitting algorithm to minimize the sum of two proper convex functions, one of which having a Lipschitz continuous gradient and the other being partly smooth relative to an active manifold $\Mm$. We propose a generic framework under which we show that the Forward--Backward (i) correctly identifies the active manifold $\Mm$ in a finite number of iterations, and then (ii) enters a local linear convergence regime that we characterize precisely. This gives a grounded and unified explanation to the typical behaviour that has been observed numerically for many problems encompassed in our framework, including the Lasso, the group Lasso, the fused Lasso and the nuclear norm regularization to name a few. These results may have numerous applications including in signal/image processing processing, sparse recovery and machine learning.
\end{abstract}
\keywords{Partial smoothness, Activity identification, Local linear convergence, Forward--Backward splitting}

%%%%%%%%%%%%%%%%%%%%%%%%%%%%%%%%%%%%%
\begingroup
\let\clearpage\relax

\section{Introduction}\label{eq:intro}

\subsection{Problem statement}

Convex optimization has become ubiquitous in most quantitative disciplines of science. A common trend in modern science is the increase in size of datasets,
which drives the need for more efficient optimization methods. Our goal is the generic minimization of composite functions of the form 
\beq
\label{eq:minP}\tag{$\calP$}
\min_{x\in\bbR^n} \Ba{\Phi(x) = F(x) + J(x)}  ,
\eeq
where
\begin{enumerate}[leftmargin=1.75cm, label=(\textbf{A.\arabic{*}}), ref = {\textbf{A.\arabic{*}}}]
\item \label{A-J} $J \in \lsc$, the set of proper, lower semi-continuous and convex functions;
\item \label{A-F} $F$ is a convex and $C^{1,1}(\bbR^n)$ function whose gradient is $\beta$--Lipschitz continuous;
\item \label{A-nonempty} $\Argmin \Phi \neq \emptyset$.
\end{enumerate}

The class of problem \eqref{eq:minP} covers many popular non-smooth convex optimization problems encountered in various fields throughout science and engineering, including signal/image processing, machine learning and classification. For instance, taking $F=\tfrac{1}{2\lambda}\norm{y - A \cdot}^2$ for some operator $A: \RR^{n} \to \RR^{m}$ and $\lambda > 0$, we recover the Lasso problem when $J=\norm{\cdot}_1$, the group Lasso for $J=\norm{\cdot}_{1,2}$, the fused Lasso for $J=\norm{D^*\cdot}_1$ with $D=[D_\mathrm{DIF}, \, \epsilon\Id]$ and $D_\mathrm{DIF}$ is the finite difference operator, anti-sparsity regularization when $J=\norm{\cdot}_\infty$, and nuclear norm regularization when $J=\norm{\cdot}_*$.

The standard (non-relaxed) version of the Forward--Backward (FB) splitting algorithm \cite{lions1979splitting} to solve \eqref{eq:minP} updates to a new iterate $\xkp$ based on the following rule,
\begin{equation}
\label{eq:fbs}
\xkp = \prox_{\gamma_k J} \Pa{\xk - \gamma_k \nabla F(\xk)},
\end{equation}
starting from any point $x^0 \in \bbR^n$, where $0 < \underline{\gamma} \leq \gamma_k \leq \overline{\gamma} < 2/\beta$. The proximity operator is defined as, for $\gamma>0$
\[
\prox_{\gamma J}(x) = \argmin_{z\in\bbR^n} \frac{1}{2\gamma}\norm{z-x}^2 + J(z) .
\]

%%%%%%%%%%%%%%%%%%%%%%%%%%%%%%%%%%%%%%%%%%%%%%%%%%%%%%%%
\subsection{Contributions}

In this paper, we present a unified local linear convergence analysis for the FB algorithm to solve \eqref{eq:minP} when $J$ is in addition partly smooth relative to a manifold $\Mm$ (see Definition~\ref{dfn-partly-smooth}). The class of partly smooth functions is very large and encompasses all previously discussed examples as special cases. More precisely, we first show that FB has a finite identification property, meaning that after a finite number of iterations, say $K$, all iterates obey $x_k \in \Mm$ for $k \geq K$. Exploiting this property, we then show that after such a large enough number of iterations, $x_k$ converges locally linearly. We characterize this regime and the rates precisely depending on the structure of the active manifold $\Mm$. In general, $x_k$ converges locally $Q$-linearly, and when $\Mm$ is an linear subspace, the convergence becomes $R$-linear. Several experimental results on some of the problems discussed above are provided to support our theoretical findings.

%%%%%%%%%%%%%%%%%%%%%%%%%%%%%%%%%%%%%%%%%%%%%%%%%%%%%%%%
\subsection{Related work}

Finite support identification and local $R$-linear convergence of FB to solve the Lasso problem, though in infinite-dimensional setting, is established in \cite{bredies2008linear} under either a very restrictive injectivity assumption, or a non-degeneracy assumption which is a specialization of ours (see \eqref{eq:conditionnondeg}) to the $\ell_1$-norm. A similar result is proved in \cite{hale07}, for $F$ being a smooth convex and locally $C^2$ function and $J$ the $\ell_1$-norm, under restricted injectivity and non-degeneracy assumptions. The $\ell_1$-norm is a partly smooth function and hence covered by our results. \cite{Wainwright12} proved $Q$-linear convergence of FB to solve \eqref{eq:minP} for $F$ satisfying restricted smoothness and strong convexity assumptions, and $J$ being a so-called convex decomposable regularizer. Again, the latter is a small subclass of partly smooth functions, and their result is then covered by ours. For example, our framework covers the total variation (TV) semi-norm and $\ell_{\infty}$-norm regularizers which are not decomposable.

In \cite{hare2004identifying,Hare-Lewis-Algo}, the authors have shown finite identification of active manifolds associated to partly smooth functions for various algorithms, including the (sub)gradient projection method, Newton-like methods, the proximal point algorithm. Their work extends that of e.g. \cite{Wright-IdentSurf} on identifiable surfaces from the convex case to a general non-smooth setting. Using these results, \cite{HareFB11} considered the algorithm \cite{Tseng09} to solve \eqref{eq:minP} where $J$ is partly smooth, but not necessarily convex and $F$ is $C^2(\bbR^n)$, and proved finite identification of the active manifold. However, the convergence rate remains an open problem in all these works.

%%%%%%%%%%%%%%%%%%%%%%%%%%%%%%%%%%%%%%%%%%%%%%%%%%%%%%%%
\subsection{Notations}

For a nonempty convex set $\calC \subset \RR^n$, $\ri(\Cc)$ denotes its relative interior, $\Aff(\calC)$ is its affine hull, $\LinHull(\calC)$ is the subspace parallel to it. We denote $\proj_{\calC}$ the orthogonal projector onto $\calC$, and for a matrix $A \in \RR^{m \times n}$, $A_\calC = A \circ \proj_\calC$.

Suppose $\Mm \subset \bbR^n$ is a $C^2$-manifold around $x \in \bbR^n$, denote $\tgtManif{x}{\Mm}$ the tangent space of $\Mm$ at $x \in \bbR^n$. The tangent model subspace is defined as
\beqn
T_{x} = \LinHull\Pa{\partial J(x)}^\perp .
\eeqn
It is easy to see that $\proj_{T_x}\Pa{\partial J(x)}$ is single-valued, we therefore define the generalized sign vector
\[
e_x = \proj_{T_x}\Pa{\partial J(x)}.
\]
It is straightforward to show that $e_x=\proj_{\Aff\pa{\partial J(x)}}(0)$.

\section{Partial smoothness}\label{sec:partly_smooth_func}

In addition to \iref{A-J}, our central assumption is that $J$ is a partly smooth function. Partial smoothness of functions is originally defined in \cite{Lewis-PartlySmooth}. Our definition hereafter specializes it to functions in $\lsc$.

\begin{definition}\label{dfn-partly-smooth}
Let $J \in \lsc$, and $x \in \bbR^n$ such that $\partial J(x) \neq \emptyset$.
$J$ is \emph{partly smooth} at $x$ relative to a set $\Mm$ containing $x$ if 
\begin{enumerate}[label={\rm (\arabic{*})}]
\item \label{PS-C2} (Smoothness) $\Mm$ is a $C^2$-manifold around $x$ and $J$ restricted to $\Mm$ is $C^2$ around $x$.
\item \label{PS-Sharp} (Sharpness) The tangent space $\tgtManif{x}{\Mm}$ is $T_{x}$.
\item \label{PS-DiffCont} (Continuity) The set--valued mapping $\partial J$ is continuous at $x$ relative to $\Mm$.
\end{enumerate}
In the following, the class of partly smooth functions at $x$ relative to $\Mm$ is denoted as $\PSF{x}{\Mm}$. When $\Mm$ is an affine manifold, then $\Mm = x + T_x$, and we denote this subclass as $\PSFA{x}{x+T_x}$. When $\Mm$ is a linear manifold, then $\Mm = T_x$, and we denote this subclass as $\PSFL{x}{T_x}$.
\end{definition}

Capitalizing on the results of \cite{Lewis-PartlySmooth}, it can be shown that under mild transversality assumptions, the set of continuous convex partly smooth functions is closed under addition and pre-composition by a linear operator. Moreover, absolutely permutation-invariant convex and partly smooth functions of the singular values of a real matrix, i.e. spectral functions, are convex and partly smooth spectral functions of the matrix \cite{Daniilidis-SpectralIdent}. 

It then follows that all the examples discussed in Section~\ref{eq:intro}, including $\ell_1$, $\ell_{1,2}$, $\ell_\infty$ norms, TV semi-norm and nuclear norm, are partly smooth. In fact, the nuclear norm is partly smooth at a matrix $x$ relative to the manifold $\Mm = \enscond{x'}{ \rank(x')=\rank(x)}$. The first three regularizers are all part of the class $\PSFL{x}{T_x}$, see Section~\ref{sec:experiment} and \cite{Vaiter14} for details. 

%%%%%%%%%%%%%%%%%%%%%%%%%%%%%%%%%%%%%%%%%%%%%%%%%%%%%%%%
We now define a subclass of partly smooth functions where the active manifold is actually a subspace and the generalized sign vector $e_x$ is locally constant.
\begin{definition}\label{dtf-polyhedral}
$J$ belongs to the class $\PSFLS{x}{T_x}$ if and only if $J \in \PSFA{x}{x+T_x}$ or $J \in \PSFL{x}{T_x}$ and $e_x$ is constant near $x$, i.e. there exists a neighbourhood $U$ of $x$ such that $\forall x' \in T_x \cap U$
\[
e_{x'} = e_x .
\]
\end{definition}
A typical family of functions that comply with this definition is that of partly polyhedral functions~\cite[Section~6.5]{Vaiter13}, which includes the $\ell_1$ and $\ell_\infty$ norms, and the TV semi-norm.

%%%%%%%%%%%%%%%%%%%%%%%%%%%%%%%%%%%%%%%%%%%%%%%%%%%%%%%%%%%%%%%%%%%%%%%%%%%%%%%%%%%%%%%%%%%%%%%%%%%%%%%%%%%%%%
\section{Local linear convergence of the FB method}

In this section, we state our main result on finite identification and local linear convergence of the FB method under partial smoothness. 

%%%%%%%%%%%%%%%%%%%%%%%%%%%%%%%%%%%%%%%%%%%%%%%%%%%%%%%%
\begin{theorem}[Local linear convergence]\label{thm:linear_rate}
Assume that \iref{A-J}-\iref{A-nonempty} hold. Suppose that the FB scheme is used to create a sequence $\xk$ which converges to $\xsol \in \Argmin \Phi$ such that $J \in \PSF{x^\star}{\Mm_{\xsol}}$, $F$ is $C^2$ near $\xsol$ and
\beq\label{eq:conditionnondeg}
-\nabla F(\xsol) \in \ri\Pa{\partial J(\xsol)} .
\eeq 
Then we have the following,
\begin{enumerate}[label=\rm(\arabic{*})]
\item The FB scheme \eqref{eq:fbs} has the finite identification property, i.e. there exists $K \geq 0$, such that for all $k \geq K$, $x_k \in \Mm_{\xsol}$.
\item Suppose moreover that $\exists \alpha > 0$ such that
\beq\label{eq:conditionstrongconv}
\proj_T\nabla^2 F(\xsol)\proj_T \succeq \alpha \Id ,
\eeq
where $T:=T_{\xsol}$. Then for all $k \geq K$, the following holds,
\begin{enumerate}[label={\rm(\roman{*})}, ref={\rm(\roman{*})}]
\item \label{Q-rate}
\emph{$Q$-linear convergence}: if $0 < \underline{\gamma} \leq \gamma_k \leq \bar{\gamma} < \min\!\Pa{2\alpha\beta^{-2},2\beta^{-1}}$, then given any $1 > \rho > \widetilde{\rho}$, 
\[
\norm{\xkp - \xsol} \leq \rho \norm{ \xk -  \xsol },
\]
where $\widetilde{\rho}^2 = \max\Ba{ q(\underline{\gamma}), q(\bar{\gamma}) } \in [0,1[$ and $q(\gamma) = 1 - {2\alpha}\gamma + {\beta^2}\gamma^2$. 
\item \label{R-rate}
\emph{$R$-linear convergence}: if $J \in \PSFA{\xsol}{\xsol+T}$ or $J \in \PSFL{\xsol}{T}$, then for $0 < \underline{\gamma} \leq \gamma_k \leq \bar{\gamma} < \min\!\Pa{2\alpha\nu^{-2},2\beta^{-1}}$, where $\nu \leq \beta$ is the Lipschitz constant of $\proj_T \nabla F \proj_T$, then
\[
\norm{\xkp-\xsol} \leq \rho_k \norm{\xk-\xsol} ,
\]
where $\rho_k^2 = {1 - {2\alpha}\gamma_k + {\nu^2}\gamma_k^2} \in [0,1[$. Moreover, if $\frac{ \alpha }{ \nu^2 } \leq \bar{\gamma}$ and set $\gamma_k \equiv \frac{ \alpha }{ \nu^2 }$, then the optimal linear rate can be achieved
\[
\rho^* = \sqrt{1 - \frac{\alpha^2}{\nu^2}} .
\]
\end{enumerate}
\end{enumerate}
\end{theorem}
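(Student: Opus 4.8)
My plan is to split the argument into the two claimed properties and rely throughout on the characterization of partial smoothness. For part (1), finite identification, the natural route is to invoke the established identification machinery (in the spirit of Hare–Lewis): the FB iteration can be rewritten as $\xkp = \prox_{\gamma_k J}(\xk - \gamma_k\nabla F(\xk))$, which is equivalent to the inclusion $\gamma_k^{-1}(\xk - \xkp) - \nabla F(\xk) \in \partial J(\xkp)$. Since $\xk \to \xsol$, the fixed-point equation at $\xsol$ gives $-\nabla F(\xsol) \in \partial J(\xsol)$, and convergence of the FB sequence (guaranteed by \iref{A-J}--\iref{A-nonempty} and $\gamma_k < 2/\beta$, cf. \cite{lions1979splitting}) forces $\xk - \xkp \to 0$, hence the subgradient $v_k := \gamma_k^{-1}(\xk - \xkp) - \nabla F(\xk) \in \partial J(\xkp)$ converges to $-\nabla F(\xsol)$. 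Combined with the nondegeneracy condition \eqref{eq:conditionnondeg}, namely $-\nabla F(\xsol) \in \ri(\partial J(\xsol))$, and the fact that $J \in \PSF{\xsol}{\Mm_{\xsol}}$, the identification result of \cite{hare2004identifying,Hare-Lewis-Algo} applies verbatim to yield $\xk \in \Mm_{\xsol}$ for all $k \geq K$ for some finite $K$.

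For part (2), once identification has occurred, for $k \geq K$ all iterates lie on $\Mm_{\xsol}$, so I would pass to a local analysis on the manifold. The key is that $J$ restricted to $\Mm$ is $C^2$ near $\xsol$ and that the proximal step, restricted to the manifold, behaves like a Riemannian gradient-type step. Writing the FB update near $\xsol$ and using that $\xsol$ is a fixed point, I would Taylor-expand the map $x \mapsto \prox_{\gamma J}(x - \gamma\nabla F(x))$ around $\xsol$ along $\Mm$; the relevant linearization acts on the tangent space $T = T_{\xsol}$ and takes the form $\proj_T(\Id - \gamma \proj_T \nabla^2 F(\xsol)\proj_T)\proj_T$ up to curvature terms and terms negligible under the $C^2$ smoothness of $J$ on $\Mm$ (these contribute to the geometry but not to the leading contraction factor because $e_x$ varies smoothly). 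Using \eqref{eq:conditionstrongconv}, $\proj_T\nabla^2 F(\xsol)\proj_T \succeq \alpha\Id$, together with $\beta$-Lipschitzness of $\nabla F$, the operator norm of this linear map on $T$ is bounded by $\sqrt{q(\gamma)}$ with $q(\gamma) = 1 - 2\alpha\gamma + \beta^2\gamma^2$. Choosing $\gamma$ in the stated range keeps $q(\gamma) < 1$; taking any $\rho > \widetilde\rho = \sqrt{\max\{q(\underline\gamma), q(\bar\gamma)\}}$ absorbs the higher-order remainder for $k$ large, giving the $Q$-linear bound \ref{Q-rate}.

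For the $R$-linear case \ref{R-rate}, when $\Mm$ is affine or linear ($J \in \PSFA{\xsol}{\xsol + T}$ or $\PSFL{\xsol}{T}$), the curvature of $\Mm$ vanishes and the linearized iteration is exact in the relevant directions: one has $\xkp - \xsol = \proj_T(\xk - \xsol) - \gamma_k \proj_T(\nabla F(\xk) - \nabla F(\xsol)) + (\text{terms that telescope})$. Using that $\proj_T\nabla F\proj_T$ is $\nu$-Lipschitz with $\nu \leq \beta$ sharpens the per-step factor to $\rho_k = \sqrt{1 - 2\alpha\gamma_k + \nu^2\gamma_k^2}$; however, because the $\gamma_k$ may vary and the contraction is only in the $T$-component, the bound on $\norm{\xkp - \xsol}$ is of $R$-linear (not necessarily $Q$-linear) type, obtained by composing the per-step estimates. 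Optimizing $\rho_k^2 = 1 - 2\alpha\gamma_k + \nu^2\gamma_k^2$ over $\gamma_k$ gives the minimizer $\gamma_k = \alpha/\nu^2$ (admissible when $\alpha/\nu^2 \leq \bar\gamma$), yielding $\rho^* = \sqrt{1 - \alpha^2/\nu^2}$.

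The main obstacle I anticipate is the linearization step on a curved manifold: making rigorous that the second-order geometry of $\Mm$ and the $C^2$ variation of $\partial J$ along $\Mm$ only perturb the contraction by an $o(1)$ amount requires care — specifically, controlling $\prox_{\gamma_k J}$ near $\xsol$ via its derivative along $\Mm$ and showing the normal components decay at least as fast. This is where the distinction between the general partly smooth case (only $Q$-linear, with the looser constant $\beta$) and the affine/linear case (sharp $R$-linear rate with $\nu$) genuinely arises, and it is the heart of the proof.
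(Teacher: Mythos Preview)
Your plan for Part~(1) is essentially the paper's argument: both routes feed the identification theorem of Hare--Lewis with a convergent subgradient sequence and the nondegeneracy condition, differing only in whether one packages things via $\Phi$ or directly via $J$.

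For Part~(2), however, your strategy diverges from the paper's and carries a real gap. You propose to linearize the full Forward--Backward operator, including $\prox_{\gamma J}$, along $\Mm_{\xsol}$, and then argue that the curvature of $\Mm$ and the $C^2$ variation of $J|_\Mm$ contribute only $o(1)$ to the contraction factor. That last claim is precisely the ``main obstacle'' you flag, and it is not innocuous: the derivative of $\prox_{\gamma J}$ at the fixed point involves the Riemannian Hessian of $J$ on $\Mm$ (not just a projection onto $T$), and there is no a priori reason these second-order terms should be dominated uniformly in the way you need. Your asserted linearization $\proj_T(\Id-\gamma\proj_T\nabla^2 F(\xsol)\proj_T)\proj_T$ and its operator norm $\sqrt{q(\gamma)}$ are therefore not justified as written.

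The paper avoids this obstacle entirely. For (2)(i) it never differentiates the prox: firm non-expansiveness gives $\norm{\xkp-\xsol}\leq\norm{G_k\xk-G_k\xsol}$ immediately, so only the forward step $G_k=\Id-\gamma_k\nabla F$ needs analysis. The key geometric input is then the elementary lemma that for $\xk\in\Mm$ near $\xsol$, one has $\xk-\xsol=\proj_T(\xk-\xsol)+o(\norm{\xk-\xsol})$; plugging this into the mean-value expansion of $\langle\xk-\xsol,\nabla F(\xk)-\nabla F(\xsol)\rangle$ yields the $\alpha$-coercivity up to $o(\norm{\xk-\xsol}^2)$, and expanding $\norm{G_k\xk-G_k\xsol}^2$ gives $q(\gamma_k)\norm{\xk-\xsol}^2+o(\norm{\xk-\xsol}^2)$ directly. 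For (2)(ii) the paper again does not linearize: when $\Mm$ is affine/linear one writes the optimality condition as $\xkp=\proj_T G_k\xk-\gamma_k e_{k+1}$ (and similarly at $\xsol$), subtracts, and uses maximal monotonicity of $\partial J$ to obtain $\norm{\xkp-\xsol}\leq\norm{\proj_T(G_k\xk-G_k\xsol)}$, after which the sharper Lipschitz constant $\nu$ of $\proj_T\nabla F\proj_T$ replaces $\beta$. Your sketch mentions neither the non-expansiveness shortcut nor the monotonicity step with the generalized signs $e_{k+1},e^\star$, which are exactly the devices that make the argument go through without touching the derivative of the prox.
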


%%%%%%%%%
\begin{remark}
$\;$\\\vspace*{-0.5cm}
\begin{itemize}
\item The non-degeneracy assumption in \eqref{eq:conditionnondeg} can be viewed as a geometric generalization of the strict complementarity of non-linear programming. Building on the arguments of \cite{Hare-Lewis-Algo}, it turns out that it is almost a necessary condition for finite identification of $\Mm$.
\item Under the non-degeneracy and local strong convexity assumptions \eqref{eq:conditionnondeg}-\eqref{eq:conditionstrongconv}, one can actually show that  $\xsol$ is unique, see Theorem~\ref{thm:uniq}.
\item For $F=G \circ A$, where $G$ satisfies \iref{A-F}, assumption \eqref{eq:conditionstrongconv} and the constant $\alpha$ can be restated in terms of local strong convexity of $G$ and restricted injectivity of $A$ on $T$, i.e. $\ker(A) \cap T = \ens{0}$.
\item Partial smoothness guarantees that $x_k$ arrives the active manifold in finite time, hence raising the hope of acceleration using second-order information. For instance, one can think of turning to geometric methods along the manifold $\Mm_{\xsol}$, where faster convergence rates can be achieved. This is also the motivation behind the work of e.g. \cite{miller2005newton}. 
\end{itemize}
\end{remark}

%%%%%%%%%%%%%%%%%%%%%%%%%%%%%%%%%%%%%%%%%%%%%%%%%%%%%%%%
When $J \in \PSFLS{\xsol}{T}$, it turns out that the restricted convexity assumption \eqref{eq:conditionstrongconv} of Theorem~\ref{thm:linear_rate} can be removed in some cases, but at the price of less sharp rates.

\begin{theorem}\label{thm:linear_ratepsfls}
Suppose that assumptions \iref{A-J}-\iref{A-nonempty} hold. For $\xsol \in \Argmin \Phi$, suppose that $J \in \PSFLS{\xsol}{T_{\xsol}}$, \eqref{eq:conditionnondeg} is fulfilled, and there exists a subspace $V$ such that $\ker\!\Pa{\proj_T\nabla^2 F(x)\proj_T}=V$ for any $x \in \Ball_\epsilon(\xsol)$, $\epsilon > 0$. Let the FB scheme be used to create a sequence $x_k$ that converges to $\xsol$ with $0 < \underline{\gamma} \leq \gamma_k \leq \bar{\gamma} < \min\!\Pa{2\alpha\beta^{-2},2\beta^{-1}}$, where $\alpha > 0$ (see the proof). Then there exists a constant $C > 0$ and $\rho \in [0,1[$ such that for all $k$ large enough 
\[
\norm{x_k-\xsol} \leq C \rho^k .
\]
\end{theorem}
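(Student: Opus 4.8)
The plan is to bootstrap on the finite identification granted by Theorem~\ref{thm:linear_rate}, to turn the Forward--Backward recursion restricted to the active subspace into an \emph{exact} linearized iteration, and then to treat separately the error component on which $\nabla^2F$ is uniformly elliptic and the component on which it is flat. Since \eqref{eq:conditionnondeg} holds and $\PSFLS{\xsol}{T_{\xsol}}\subset\PSF{\xsol}{\Mm_{\xsol}}$, part~(1) of Theorem~\ref{thm:linear_rate} provides $K$ with $x_k\in\Mm_{\xsol}$ for all $k\ge K$, where $\Mm_{\xsol}$ is the affine space $\xsol+T$ if $J\in\PSFA{\xsol}{\xsol+T}$ and the linear space $T$ (with $e$ locally constant) otherwise; write $T:=T_{\xsol}$ and $z_k:=x_k-\xsol\in T$. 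Expressing \eqref{eq:fbs} through $\gamma_k^{-1}\Pa{x_k-\gamma_k\nabla F(x_k)-x_{k+1}}\in\partial J(x_{k+1})$, projecting onto $T$, using that $x\mapsto e_x$ is the single-valued selection of $\proj_T\partial J$ along $\Mm_{\xsol}$, and subtracting the identity $-\proj_T\nabla F(\xsol)=e_{\xsol}$ coming from $-\nabla F(\xsol)\in\partial J(\xsol)$, one gets for $k\ge K$
\[
z_{k+1}=z_k-\gamma_k\proj_T\Pa{\nabla F(x_k)-\nabla F(\xsol)}-\gamma_k\Pa{e_{x_{k+1}}-e_{\xsol}} .
\]
The last term vanishes for large $k$ when $e$ is locally constant; when $\Mm_{\xsol}$ is affine, $e$ is a $C^1$ field on a flat manifold, so $e_{x_{k+1}}-e_{\xsol}=H_kz_{k+1}$ with $H_k:=\proj_T\int_0^1\nabla^2\Pa{J|_{\Mm_{\xsol}}}(\xsol+tz_{k+1})\,dt\,\proj_T\succeq0$ by convexity of $J$. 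Together with $\nabla F(x_k)-\nabla F(\xsol)=\int_0^1\nabla^2F(\xsol+tz_k)\,dt\;z_k$ and $z_k\in T$ this yields the exact recursion
\[
\Pa{\Id+\gamma_kH_k}z_{k+1}=\Pa{\Id-\gamma_kG_k}z_k ,\qquad G_k:=\int_0^1\proj_T\nabla^2F(\xsol+tz_k)\proj_T\,dt ,
\]
with $H_k\equiv0$ in the locally constant case.

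Next I would exploit the structural hypothesis. Because $\ker\Pa{\proj_T\nabla^2F(x)\proj_T}=V$ for all $x\in\Ball_\epsilon(\xsol)$ and the kernel of an average of positive semidefinite operators is the intersection of their kernels, $\ker G_k=V$; since this kernel always contains $T^\perp$, one has $V^\perp\subset T$ and $T=(V\cap T)\oplus V^\perp$. Define $\alpha$ as the infimum over $x\in\Ball_\epsilon(\xsol)$ of the smallest nonzero eigenvalue of $\proj_T\nabla^2F(x)\proj_T$; continuity of $\nabla^2F$, compactness of the ball, and the fact that the kernel is \emph{constantly} $V$ force $\alpha>0$, and this is the constant appearing in the statement. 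Hence $\alpha\Id\preceq G_k\preceq\beta\Id$ on $V^\perp$ while $G_k$ vanishes on $V\cap T$. Decompose $z_k=u_k+w_k$ with $u_k:=\proj_{V^\perp}z_k$ and $w_k:=\proj_{V\cap T}z_k$. Squaring the recursion and using $H_k\succeq0$, $\norm{w_k}\le\norm{z_k}$ and the spectral bounds,
\[
\norm{z_{k+1}}^2\le\norm{w_k}^2+q(\gamma_k)\norm{u_k}^2\le q\,\norm{z_k}^2+(1-q)\norm{w_k}^2 ,
\]
where $q(\gamma)=1-2\alpha\gamma+\beta^2\gamma^2$ and $q:=\max\{q(\underline\gamma),q(\bar\gamma)\}\in[0,1[$ precisely because $\bar\gamma<\min\Pa{2\alpha\beta^{-2},2\beta^{-1}}$. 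Projecting the recursion onto $V\cap T$, where $G_k$ is zero, gives $w_{k+1}=w_k-\gamma_k\proj_{V\cap T}(H_kz_{k+1})$, hence $\norm{w_{k+1}-w_k}\le\bar\gamma L\norm{z_{k+1}}$ for a local bound $L$ on $\norm{H_k}$ (with $L=0$ in the locally constant case).

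Then I would conclude the geometric decay. When $J\in\PSFL{\xsol}{T}$ with $e$ locally constant, $L=0$ makes $w_k$ constant for $k\ge K$, and since $x_k\to\xsol$ this constant is $0$; thus $z_k\in V^\perp$ and $\norm{z_{k+1}}\le\sqrt{q(\gamma_k)}\norm{z_k}$, giving the claim with $\rho=\sqrt q$ (and a sharper, per-iteration estimate). In the affine case, the scalars $a_k:=\norm{z_k}$ and $\omega_k:=\norm{w_k}\le a_k$ satisfy $a_{k+1}^2\le q\,a_k^2+(1-q)\omega_k^2$, and, because $x_k\to\xsol$ forces $w_\infty=0$ so that $\omega_k\le\bar\gamma L\sum_{j\ge k}a_{j+1}$, a self-consistent comparison argument — in which the drift of $w$ from step $k$ onward is bounded by the not-yet-known-to-be-small tail of $(a_j)$ and then reinjected into the first inequality — delivers $a_k\le C\rho^k$ for all large $k$, for some $C>0$ and some $\rho$ with $\sqrt q<\rho<1$. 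Since $\norm{x_k-\xsol}=a_k$, this is the assertion.

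The main obstacle is exactly this last, affine step. The error component $w_k$ in $V\cap T$ is never contracted by the iteration itself, $\nabla^2F$ being flat in those directions; it moves only through the curvature of $J$ along $\Mm_{\xsol}$, which couples it back to $z_{k+1}$ and breaks the clean block decoupling used in Theorem~\ref{thm:linear_rate}. One therefore has to give up the sharp per-iteration $Q$-linear rate and run a bootstrap that simultaneously controls the contraction of $u_k$ and the slow drift of $w_k$; this is the source of the unspecified constant $C$ and of the generally non-optimal $\rho$, and it is here that the \emph{assumed} (rather than derived) convergence $x_k\to\xsol$ is used essentially, to pin $w_\infty=0$ and close the recursion.
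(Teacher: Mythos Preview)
Your proof hinges on a misreading of Definition~\ref{dtf-polyhedral}. The class $\PSFLS{\xsol}{T}$ requires $e_x$ to be locally constant in \emph{both} the affine and the linear case; the sentence ``$J \in \PSFA{x}{x+T_x}$ or $J \in \PSFL{x}{T_x}$ and $e_x$ is constant near $x$'' is to be parsed with the local constancy clause applying to the whole disjunction, as the paper's proof makes explicit (``using in addition that $e^\star$ is locally constant'') and as the polyhedral examples following the definition confirm. Consequently your curvature term $H_k$ is identically zero in all cases covered by the theorem, and the entire ``affine'' branch of your argument---the coupled recursion for $(a_k,\omega_k)$ and the bootstrap you sketch at the end---is unnecessary.

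With that branch removed, what remains is essentially the paper's proof. Your observation that $G_k$ annihilates $V\cap T$, so that $w_k=\proj_{V\cap T}z_k$ is constant and therefore (by $x_k\to\xsol$) zero for large $k$, is exactly the key step: the paper phrases it as $\proj_V x_{k+1}=\proj_V x_k$ from self-adjointness of $H_t=\proj_T\nabla^2 F(\cdot)\proj_T$. After that, the contraction on $V^\perp$ via $q(\gamma)=1-2\alpha\gamma+\beta^2\gamma^2$ and the identification of $\alpha$ as the uniform smallest nonzero eigenvalue of $\proj_T\nabla^2 F(x)\proj_T$ on $\Ball_\epsilon(\xsol)$ match the paper line for line.

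It is also worth noting that your affine-branch bootstrap, besides being superfluous, is not actually carried out: the phrase ``a self-consistent comparison argument \dots delivers $a_k\le C\rho^k$'' is a claim, not a proof. Closing a recursion of the form $a_{k+1}^2\le q\,a_k^2+(1-q)\omega_k^2$ with $\omega_k$ controlled only by the tail $\sum_{j\ge k}a_{j+1}$ requires a genuine argument (e.g.\ choosing $\rho$ close enough to $1$ and checking a consistency inequality), and none is given. Fortunately, under the correct reading of $\PSFLS{\xsol}{T}$ you never need it.
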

A typical example where this result applies is when $F=G \circ A$ with $G$ locally strongly convex, in which case $V=\ker(A_T)$.  \\

We finally consider a special case of $F$ where it is a quadratic function of the form, 
\beq\label{eq:F-quadratic}
F(x) = \frac{1}{2} \norm{A x - y}^2 ,
\eeq
where $A: \RR^{n} \to \RR^{m}$ is a bounded linear operator. For this case, the rates in Theorem~\ref{thm:linear_rate} can be refined further since the gradient operator $\nabla F$ becomes linear. Let $\sigma_{\max}$ be the largest eigenvalue of $A^*A$, and $\sigma_{m},~\sigma_{M}$ be the smallest and largest eigenvalues of $A_{T}^*A_{T}$.

%%%%%%%
\begin{corollary}\label{thm:quadratic-F-fb}
Let $F$ as in \eqref{eq:F-quadratic}.
Suppose that assumptions \iref{A-J} and \iref{A-nonempty} hold. Let the FB scheme be used to create a sequence $\xk$ that converges to $\xsol \in \Argmin \Phi$ such that $J \in \PSF{x^\star}{\calM_{\xsol}}$, \eqref{eq:conditionnondeg} is fulfilled, and 
\beq\label{eq:kerA-cap-T-0}
\ker(A) \cap T_{\xsol} = \ba{0}  .
\eeq
Then there exists $K > 0$ such that for all $k \geq K$,
\begin{enumerate}[label={\rm (\arabic{*})}, ref={\rm(\arabic{*})}]
\item
\emph{$Q$-linear rate}: if $0 < \underline{\gamma} \leq \gamma_k \leq \bar{\gamma} < 2\sigma_{m}/\sigma_{\max}^{2}$, then given any $1 > \rho > \widetilde{\rho}$,
\[
\norm{\xkp - \xsol} \leq \rho\, \norm{ \xk -  \xsol } ,
\]
where $\widetilde{\rho}^2 = \max\Ba{ q(\underline{\gamma}), q(\bar{\gamma}) } \in [0,1[$ and $q(\gamma) = 1 - {2\sigma_{m}}\gamma + {\sigma_{\max}^2}\gamma^2$. 
\item
\emph{$R$-linear rate}: if $J \in \PSFA{\xsol}{\xsol+T}$ or $J \in \PSFL{\xsol}{T}$, then for $0 < \underline{\gamma} \leq \gamma_k \leq \bar{\gamma} < 2/\sigma_{\max}$,
\[
\norm{\xk-\xsol} \leq {\rho_k}\, \norm{\xk-\xsol} ,
\]
where $\rho_k = \max\!\Ba{ \abs{1-{\gamma_k}\sigma_{m}} , \abs{1-{\gamma_k}\sigma_{M}} } \in [0,1[$. 
Moreover if $\frac{2}{\sigma_{m}+\sigma_{M}} \leq \bar{\gamma}$ and we choose $\gamma_k \equiv \frac{2}{\sigma_{m}+\sigma_{M}}$, then the optimal rate can be achieved
\[
\rho^* = \frac{\varphi - 1}{\varphi + 1} = 1 - \frac{2}{\varphi+1}  ,
\]
where $\varphi = \sigma_{M}/\sigma_{m}$ is the condition number of $A_{T}^*A_{T}$.
\end{enumerate}
\end{corollary}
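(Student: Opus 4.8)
The plan is to obtain part~(1) as a direct specialization of Theorem~\ref{thm:linear_rate}, and part~(2) as a sharpening of its $R$-linear estimate that exploits the affine structure of $\nabla F$. Since $F$ is of the form \eqref{eq:F-quadratic}, we have $\nabla F(x)=A^*(Ax-y)$ and $\nabla^2F\equiv A^*A$, so $F$ is $C^2$ on all of $\RR^n$, $\beta=\sigma_{\max}$, and $\proj_T\nabla^2F(\xsol)\proj_T=A_T^*A_T$. The condition \eqref{eq:kerA-cap-T-0}, $\ker(A)\cap T=\ba{0}$, says precisely that $\norm{Av}^2>0$ for every nonzero $v\in T$, hence $A_T^*A_T\succeq\sigma_m\Id$ on $T$ with $\sigma_m>0$; this is \eqref{eq:conditionstrongconv} with $\alpha=\sigma_m$. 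Thus the finite identification claim of Theorem~\ref{thm:linear_rate} supplies $K$ with $\xk\in\Mm_{\xsol}$ for $k\geq K$, and its $Q$-linear estimate, specialized to $\alpha=\sigma_m$, $\beta=\sigma_{\max}$ (so that $\min(2\alpha\beta^{-2},2\beta^{-1})=2\sigma_m/\sigma_{\max}^2$, since $\sigma_m\leq\sigma_{\max}$, and $q(\gamma)=1-2\sigma_m\gamma+\sigma_{\max}^2\gamma^2$), gives part~(1) verbatim.

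For part~(2), fix $k\geq K$. Under $J\in\PSFA{\xsol}{\xsol+T}$ or $J\in\PSFL{\xsol}{T}$ the active manifold is affine or linear, so both $d_k:=\xk-\xsol$ and $d_{k+1}:=\xkp-\xsol$ lie in $T$. Writing $z_k=\xk-\gamma_k\nabla F(\xk)$ and $z^\star=\xsol-\gamma_k\nabla F(\xsol)$, the optimality condition for the prox yields some $\xi_{k+1}\in\partial J(\xkp)$ with $z_k-\xkp=\gamma_k\xi_{k+1}$, while optimality of $\xsol$ for $\Phi$ gives $\xi^\star:=-\nabla F(\xsol)\in\partial J(\xsol)$ and $z^\star-\xsol=\gamma_k\xi^\star$. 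Subtracting these identities, applying $\proj_T$, and using $\proj_Td_{k+1}=d_{k+1}$ gives
\[
d_{k+1}=\proj_T(z_k-z^\star)-\gamma_k\proj_T(\xi_{k+1}-\xi^\star).
\]
Pairing with $d_{k+1}$ and invoking monotonicity of $\partial J$, which yields $\langle\proj_T(\xi_{k+1}-\xi^\star),d_{k+1}\rangle=\langle\xi_{k+1}-\xi^\star,\xkp-\xsol\rangle\geq0$, that term drops out and Cauchy--Schwarz leaves $\norm{d_{k+1}}\leq\norm{\proj_T(z_k-z^\star)}$. This is the key step: it replaces the ambient residual $z_k-z^\star$, whose contraction factor involves \emph{all} eigenvalues of $A^*A$ (and may be trivial when $A$ is not injective), by its $T$-component, whose contraction factor involves only the eigenvalues of $A_T^*A_T$ — something plain firm nonexpansiveness of the prox does not deliver.

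It remains to evaluate $\proj_T(z_k-z^\star)$. Since $\nabla F$ is affine, $\nabla F(\xk)-\nabla F(\xsol)=A^*Ad_k$ exactly, so $z_k-z^\star=(\Id-\gamma_kA^*A)d_k$; because $d_k\in T$ this gives, as an identity on $T$, $\proj_T(z_k-z^\star)=d_k-\gamma_k\proj_T A^*A\proj_T d_k=(\Id_T-\gamma_kA_T^*A_T)d_k$. The spectrum of $A_T^*A_T$ on $T$ lies in $[\sigma_m,\sigma_M]$ with $\sigma_m>0$, whence $\norm{\Id_T-\gamma_kA_T^*A_T}=\max\ba{\abs{1-\gamma_k\sigma_m},\abs{1-\gamma_k\sigma_M}}=:\rho_k$, and combining with the previous paragraph yields $\norm{\xkp-\xsol}\leq\rho_k\norm{\xk-\xsol}$. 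One has $\rho_k<1$ as soon as $\gamma_k<2/\sigma_M$, which is implied by $\gamma_k\leq\bar{\gamma}<2/\sigma_{\max}\leq2/\sigma_M$ (the bound $2/\sigma_{\max}=2/\beta$ being the one already required for convergence of the FB iterates). Minimizing $\gamma\mapsto\max\ba{\abs{1-\gamma\sigma_m},\abs{1-\gamma\sigma_M}}$ by equating its decreasing and increasing affine branches, $1-\gamma\sigma_m=\gamma\sigma_M-1$, selects $\gamma=2/(\sigma_m+\sigma_M)$ and the optimal rate $\rho^\star=(\sigma_M-\sigma_m)/(\sigma_M+\sigma_m)=(\varphi-1)/(\varphi+1)$, admissible whenever $2/(\sigma_m+\sigma_M)\leq\bar{\gamma}$.

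The only genuinely substantial ingredient is the finite identification property, borrowed from Theorem~\ref{thm:linear_rate} (which itself rests on partial smoothness and the non-degeneracy condition \eqref{eq:conditionnondeg}); once all late iterates sit on the affine or linear manifold $\Mm_{\xsol}$, the argument collapses to the monotonicity trick above followed by linear algebra on $T$. I expect the main subtlety to be precisely this reduction: in the general $C^2$ setting of Theorem~\ref{thm:linear_rate} a second-order Taylor remainder of $\nabla F$ prevents the error recursion from being exact, whereas here linearity of $\nabla F$ together with the exact identity $\proj_Td_{k+1}=d_{k+1}$ leaves no remainder, which is why $\rho_k$ comes out sharp — replacing the quantity $\sqrt{1-2\sigma_m\gamma_k+\sigma_M^2\gamma_k^2}$ of the $R$-linear estimate of Theorem~\ref{thm:linear_rate} by the smaller $\max\ba{\abs{1-\gamma_k\sigma_m},\abs{1-\gamma_k\sigma_M}}$ and enlarging the admissible step-size range accordingly.
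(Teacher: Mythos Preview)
Your proof is correct and follows essentially the same route as the paper: part~(1) is a direct specialization of Theorem~\ref{thm:linear_rate} with $\alpha=\sigma_m$, $\beta=\sigma_{\max}$, and part~(2) reduces the recursion to the subspace $T$, uses monotonicity of $\partial J$ to discard the subgradient cross-term (the paper does this via the identity \eqref{eq:xk_xsol} and expanding $\norm{(x_{k+1}-\xsol)+\gamma_k(e_{k+1}-e^\star)}^2$, while you pair with $d_{k+1}$ and apply Cauchy--Schwarz, which is equivalent), and then reads off the operator norm of $\Id_T-\gamma_k A_T^*A_T$.
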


\section{Numerical experiments}\label{sec:experiment}

In this section, we describe some examples to demonstrate the applicability of our results. More precisely, we consider solving 
\begin{equation} \label{eq:xlasso}
\min_{x \in \RR^n} \frac{1}{2} \norm{y - A x}^2 + \lambda J(x)
\end{equation}
where $y \in \bbR^m$ is the observation, $A: \RR^{n} \to \RR^{m}$, $\lambda$ is the tradeoff parameter, and $J$ is either the $\ell_1$-norm, the $\ell_{\infty}$-norm, the $\ell_1-\ell_2$-norm, the TV semi-norm or the nuclear norm.

%%%%%%%%%%%%%%%%%%%%%%%%%%%%%%%%%%%%%%%%%%%%%%%%%%%%%%%%
\begin{example}[$\ell_1$-norm]
For $x\in\bbR^n$, the sparsity promoting $\ell_1$-norm \cite{chen1999atomi,tibshirani1996regre} is
\beqn
  J(x) = \sum_{i=1}^n |x_i| .
\eeqn
It can verified that $J$ is a polyhedral norm, and thus $J \in \PSFLS{x}{T_x}$ for the model subspace 
\beqn
  \Mm = T_x =\enscond{u \in \RR^n}{\supp(u) \subseteq \supp(x)}, ~~\mathrm{and}~~ e_x = \sign\pa{x}. 
\eeqn
The proximity operator of the $\ell_1$-norm is given by a simple soft-thresholding.
\end{example}

%%%%%%%%%%%%%%%%%%%%%%%%%%%%%%%%%%%%%%%%%%%%%%%%%%%%%%%%
\begin{example}[$\ell_1-\ell_2$-norm]
The $\ell_1-\ell_2$-norm is usually used to promote group-structured sparsity \cite{yuan2005model}. Let the support of $x\in\bbR^n$ be divided into non-overlapping blocks $\Bb$ such that $\bigcup_{b \in \Bb} b = \ba{1,\ldots,n}$. The $\ell_1-\ell_2$-norm is given by
\beqn
  J(x) = \norm{x}_\Bb = \sum_{b \in \Bb} \norm{x_b} ,
\eeqn
where $x_b = (x_i)_{i \in b} \in \RR^{|b|}$. $\norm{\cdot}_\Bb$ in general is not polyhedral, yet partly smooth relative to the linear manifold
\beqn
\Mm = T_x = \enscond{u \in \RR^n}{\supp_\Bb(u) \subseteq \supp_\Bb(x)}, ~~\mathrm{and}~~ e_x = \Pa{\calN\pa{x_b}}_{b\in\Bb} ,
\eeqn
where $\supp_\Bb(x) = \bigcup \enscond{b}{ x_{b} \neq 0 }$, $\calN\pa{x} = x/\norm{x}$ and $\calN\pa{0} = 0$. The proximity operator of the $\ell_1-\ell_2$ norm is given by a simple block soft-thresholding.
% More details of group sparsity can be found in \cite{bach2008consistency}. 
\end{example}

%%%%%%%%%%%%%%%%%%%%%%%%%%%%%%%%%%%%%%%%%%%%%%%%%%%%%%%%
\begin{example}[Total Variation]
As stated in the introduction, partial smoothness is preserved under pre-composition by a linear operator. Let $J_0$ be a closed convex function and $D$ is a linear operator. Popular examples are the TV semi-norm in which case $J_0=\norm{\cdot}_1$ and $D^*=D_\mathrm{DIF}$ is a finite difference approximation of the derivative \cite{rudin1992nonlinear}, or the fused Lasso for $D=[D_\mathrm{DIF} ,\, \epsilon\Id]$ \cite{tibshirani2004sparsity}.

If $J_0 \in \PSF{D^* x}{\Mm_0}$, then it is shown in~\cite[Theorem 4.2]{Lewis-PartlySmooth} that under an appropriate transversality condition, $J \in \PSF{x}{\Mm}$ where
\beqn
  \Mm = \enscond{u \in \RR^n}{D^* u \in \Mm_0 } .
\eeqn
In particular, for the case of the TV semi-norm, we have $J \in \PSFLS{x}{T_x}$ with
\beqn
  \Mm = T_x = \enscond{u \in \RR^n}{\supp(D^* u) \subseteq I} ~~\mathrm{and}~~ e_x = \proj_{T_x}D\sign\pa{D^*x}
\eeqn
where $I = \supp\pa{D^*x}$. The proximity operator for the 1D TV, though not available in closed form, can be obtained efficiently using either the taut string algorithm \cite{DaviesKovac01} or the graph cuts \cite{Chambolle12}.
\end{example}

%%%%%%%%%%%%%%%%%%%%%%%%%%%%%%%%%%%%%%%%%%%%%%%%%%%%%%%%
\begin{example}[$\ell_{\infty}$-norm]
For $x\in\bbR^n$, the anti-sparsity promoting $\ell_{\infty}$-norm is defined as following
\[
J(x) = \max_{1 \leq i \leq N} \abs{x_i}  .
\]
It plays a prominent role in a variety of applications including approximate nearest neighbor search \cite{jegou2012anti} or vector quantization \cite{lyubarskii2010uncertainty}, see also \cite{studer2012signal} and references therein.

It can verified that $J$ is a polyhedral norm, and thus $J \in \PSFLS{x}{T_x}$ for the model subspace
\beqn
\Mm = T_x =\enscond{\alpha}{ \alpha_{(I)} = r s_{(I)} ,~  r \in \RR }, ~~\mathrm{and}~~ e_x = \frac{s}{\abs{I}},
\eeqn
where $s = \sign(x)$ and $I = \enscond{i}{ \abs{x_i} = \norm{x}_\infty }$.
The proximity operator of the $\ell_{\infty}$-norm is given by the difference between itself and the projection onto $\ell_1$-ball.
\end{example}

%%%%%%%%%%%%%%%%%%%%%%%%%%%%%%%%%%%%%%%%%%%%%%%%%%%%%%%%
\begin{example}[Nuclear norm]
Low-rank is the spectral extension of vector sparsity to matrix-valued data $x \in \RR^{n_1 \times n_2}$, i.e. imposing the sparsity on the singular values of $x$. Let $x=U \Lambda_x V^*$ a reduced singular value decomposition (SVD) of $x$. The nuclear norm of a $x$ is defined as
\eq{
  J(x) = \norm{x}_* = \sum_{i=1}^{r} (\Lambda_x)_i , 
}
where $\rank\pa{x} = r$. It has been used for instance as SDP convex relaxation for many problems including in machine learning \cite{bach2008consistency,grave2011trace}, matrix completion \cite{recht2010guaranteed,candesExactCompletion} and phase retrieval \cite{CandesPhaseLift}.

It can be shown that the nuclear norm is partly smooth relative to the manifold \cite[Example~2]{LewisMalick08},
\beqn
\Mm = \enscond{z \in \bbR^{n_1 \times n_2}}{\rank(z) = r } .
\eeqn
The tangent space to $\calM$ at $x$ and $e_x$ are given by
\[
\tgtManif{x}{\Mm} = \enscond{z \in \bbR^{n_1\times n_2}}{z=U L^* + M V^*, ~ \forall L \in \RR^{n_2 \times r}, M \in \RR^{n_1 \times r}} , ~~\mathrm{and}~~ e_x = UV^* .
\]
The proximity operator of the nuclear norm is just soft--thresholding applied to the singular values.
\end{example}

%%%%%%%%%%%%%%%%%%%%%%%%%%%%%%%%%%%%%%%%%%%%%%%%%%%%%%%%
\paragraph*{Recovery from random measurements}
In these examples, the forward observation model is
\beq \label{eq:forward-model}
y = A x_0 + \varepsilon, \quad \varepsilon \sim \calN(0,\delta^2) ,
\eeq
where $A \in \bbR^{m\times n}$ is generated uniformly at random from the Gaussian ensemble with i.i.d. zero-mean and unit variance entries. The tested experimental settings are
\begin{enumerate}[label = (\alph{*})]
\item \textbf{$\ell_1$-norm} $m=48$ and $n=128$, $x_0$ is $8$-sparse;
\item \textbf{Total Variation} $m=48$ and $n=128$, $(D_\mathrm{DIF} x_0)$ is $8$-sparse;
\item \textbf{$\ell_\infty$-norm} $m=123$ and $n=128$, $x_0$ has $10$ saturating entries;
\item \textbf{$\ell_1-\ell_2$-norm} $m=48$ and $n=128$, $x_0$ has $2$ non-zero blocks of size $4$;
\item \textbf{Nuclear norm} $m=1425$ and $n=2500$, $x_0 \in \bbR^{50\times50}$ and $\rank(x_0) = 5$.
\end{enumerate}
The number of measurements is chosen sufficiently large, $\delta$ small enough and $\lambda$ of the order of $\delta$ so that \cite[Theorem~1]{Vaiter14} applies, yielding that the minimizer of \eqref{eq:xlasso} is unique and verifies the non-degeneracy and restricted strong convexity assumptions \eqref{eq:conditionnondeg}-\eqref{eq:conditionstrongconv}. 

The convergence profile of $\norm{\xk - \xsol}$ are depicted in Figure~\ref{fig:fb}(a)-(e). Only local curves after activity identification are shown. For $\ell_1$, TV and $\ell_\infty$, the predicted rate coincides exactly with the observed one. This is because these regularizers are all partly polyhedral gauges, and the data fidelity is quadratic, hence making the predictions of Theorem~\ref{thm:linear_rate}(ii) exact. For the $\ell_1-\ell_2$-norm, although its active manifold is still a subspace, the generalized sign vector $e_k$ is not locally constant, which entails that the the predicted rate of Theorem~\ref{thm:linear_rate}(ii) slightly overestimates the observed one. For the nuclear norm, whose active manifold is not linear, thus Theorem~\ref{thm:linear_rate}(i) applies, and the observed and predicted rates are again close.

\paragraph*{TV deconvolution} 
In this image processing example, $y$ is a degraded image generated according to the same forward model as \eqref{eq:xlasso}, but now $A$ is a convolution with a Gaussian kernel. The anisotropic TV regularizer is used. The convergence profile is shown in Figure~\ref{fig:fb}(f). Assumptions \eqref{eq:conditionnondeg}-\eqref{eq:conditionstrongconv} are checked a posteriori. This together with the fact that the anisotropic TV is polyhedral  justifies that the predicted rate is again exact.

\begin{figure}[!htb]
\centering
\subfloat[$\ell_1$ (Lasso)]{\includegraphics[width=0.33\linewidth]{./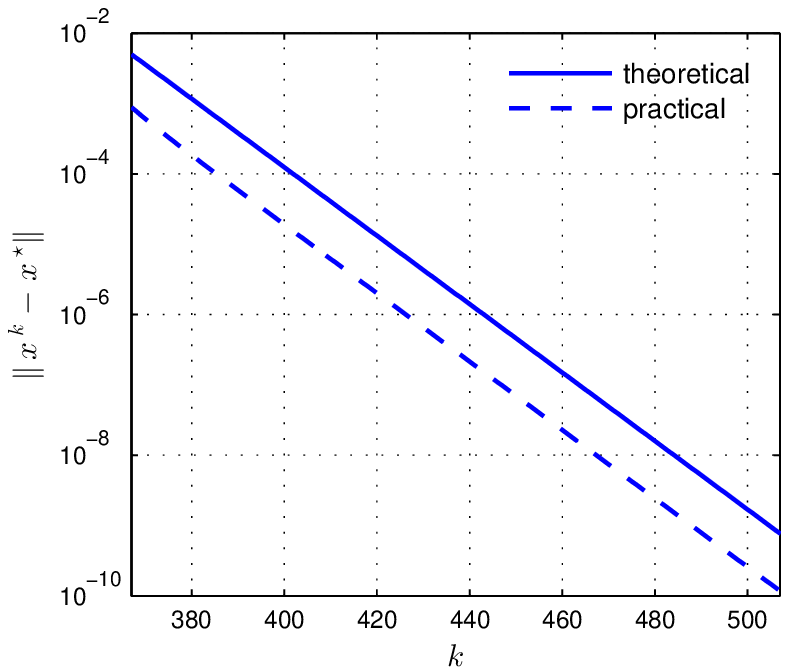}} 
\subfloat[TV semi-norm]{\includegraphics[width=0.33\linewidth]{./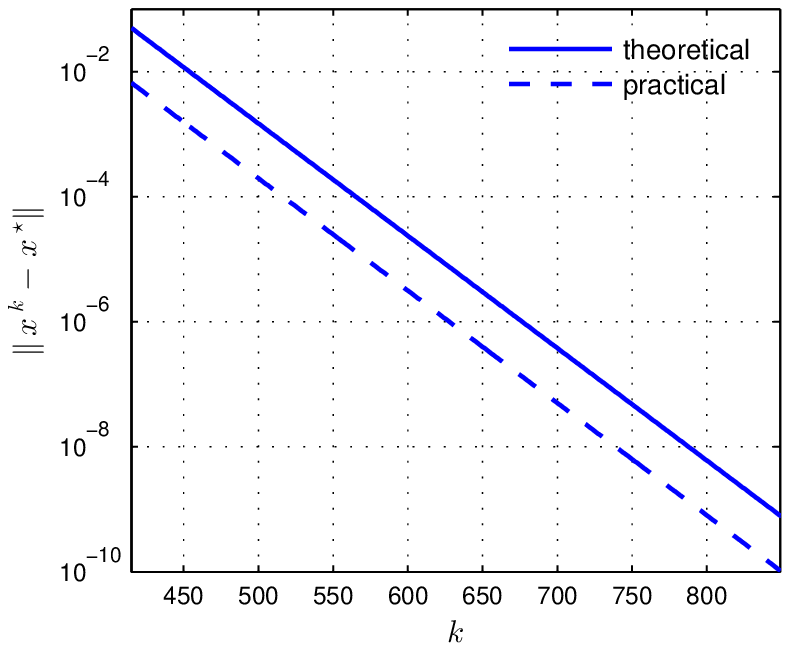}} 
\subfloat[$\ell_{\infty}$-norm]{\includegraphics[width=0.33\linewidth]{./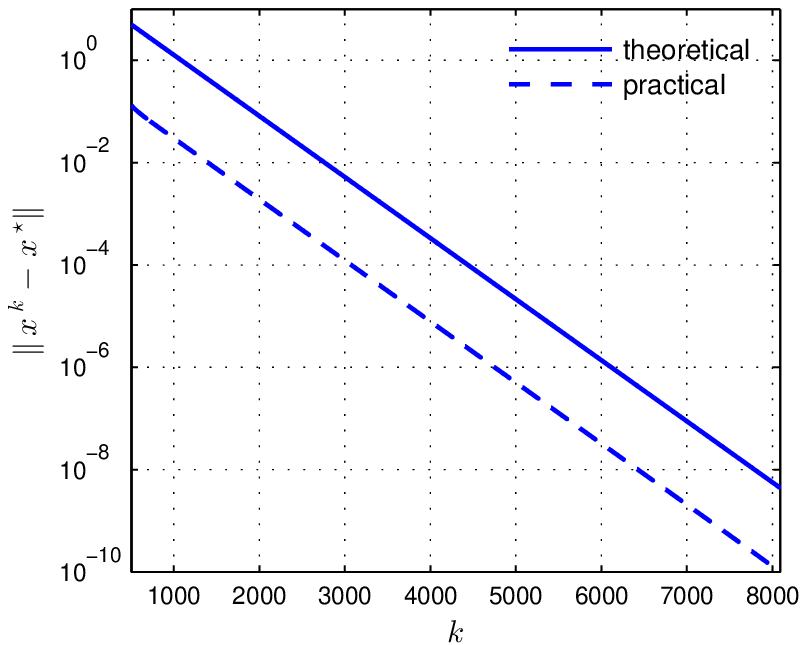}}  \\
%%%%%%%%
\subfloat[$\ell_1-\ell_2$-norm]{\includegraphics[width=0.33\linewidth]{./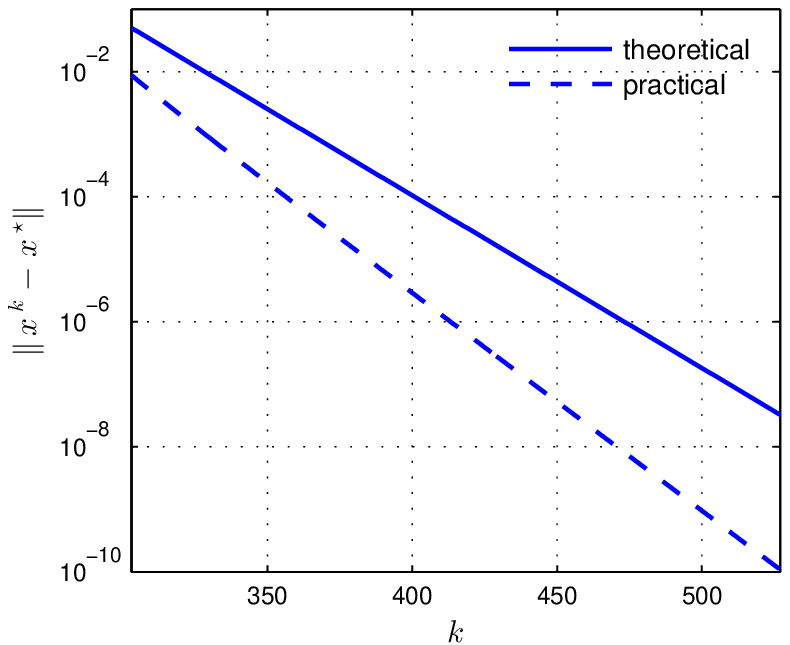}}
\subfloat[Nuclear norm]{\includegraphics[width=0.33\linewidth]{./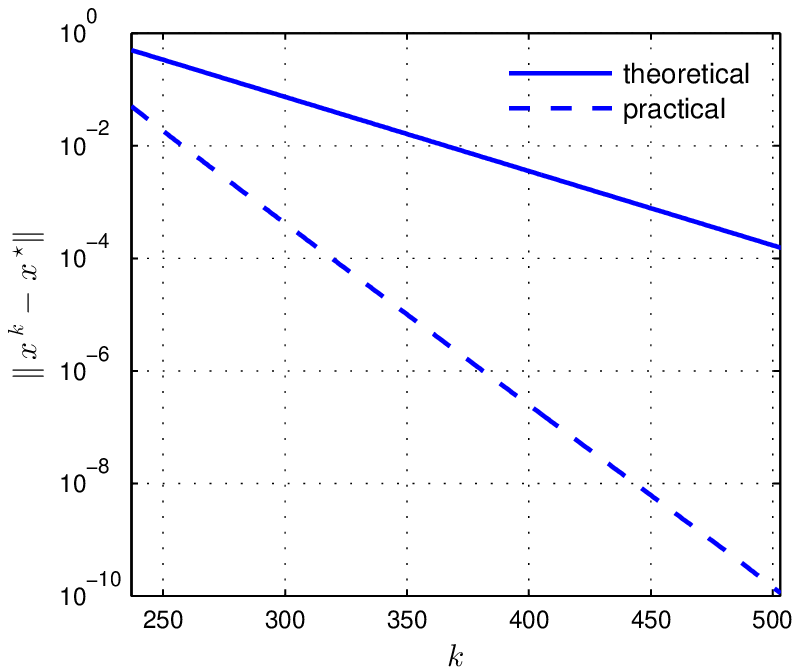}}
\subfloat[TV deconvolution]{\includegraphics[width=0.33\linewidth]{./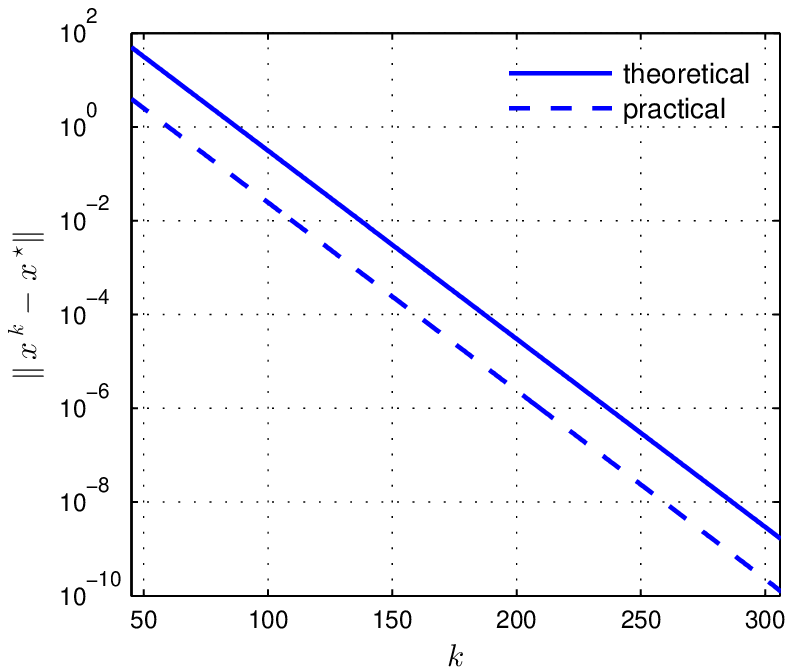}}
\caption{Observed and predicted local convergence profiles of the FB method \eqref{eq:fbs} in terms of $\norm{\xk - \xsol}$ for different types of partly smooth functions. (a) $\ell_1$-norm; (b) TV semi-norm; (c) $\ell_{\infty}$-norm; (d) $\ell_{1}-\ell_2$-norm; (e) Nuclear norm; (f) TV deconvolution.}
\label{fig:fb}
\end{figure}

\section{Proofs of the main results}\label{sec:proof}

We start with the following useful lemma.

%%%%%%%%%%%%%%%%%%%%%%%%%%%%%%%%%%%%%%%%%%%%%%%%%%%%%%%%%%%%%%%%%%%%%%%%%%%%%%%%%%%%%%%%%%%%%%%%%%%%%%%%%%%%
\begin{lemma}\label{lemma:proj_M}
Suppose that $J \in \PSF{x}{\Mm}$. Then for any $x' \in \Mm \cap U$, where $U$ is a neighborhood of $x$, the projector $\proj_{\calM}(x')$ is uniquely valued and $C^1$ around $x$, and thus 
\[
x' - x = \proj_{T_x}(x'-x) + o\Pa{\norm{x'-x}} .
\]
If $J \in \PSFL{x}{T_x}$, then
\[
x' - x = \proj_{T_x}(x'-x) .
\]
\end{lemma}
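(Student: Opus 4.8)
The plan is to use only items~\ref{PS-C2} (the $C^2$-manifold structure of $\Mm$ around $x$) and~\ref{PS-Sharp} (sharpness) of Definition~\ref{dfn-partly-smooth}; item~\ref{PS-DiffCont} plays no role here. The backbone is the classical fact that the metric projection onto a $C^2$-submanifold is single-valued and $C^1$ on a neighbourhood of the submanifold (a tubular neighbourhood / positive-reach statement). Concretely, I would fix a small neighbourhood in which $\Mm$ is a $C^2$-submanifold, form its (local) normal bundle
\[
E = \enscond{(p,v)}{p \in \Mm, ~ v \in \Pa{\tgtManif{p}{\Mm}}^\perp} ,
\]
which is a $C^1$-manifold of dimension $n$ since $\Mm$ is $C^2$, and observe that the $C^1$ map $\Psi(p,v) = p+v$ has differential at $(x,0)$ equal to $\Id_{\bbR^n}$ under the splitting $\bbR^n = T_x \oplus T_x^\perp$. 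By the inverse function theorem, $\Psi$ is a $C^1$-diffeomorphism from a neighbourhood of $(x,0)$ onto an open neighbourhood $U$ of $x$, and on $U$ one has $\proj_{\Mm} = \pi_1 \circ \Psi^{-1}$ with $\pi_1(p,v) = p$; hence $\proj_{\Mm}$ is single-valued and $C^1$ on $U$. (Alternatively one may simply invoke standard results on sets of positive reach.)

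Next I would compute the differential $D\proj_{\Mm}(x)$. For $v \in \tgtManif{x}{\Mm}$, picking a $C^1$-curve $\gamma$ in $\Mm$ with $\gamma(0)=x$, $\dot\gamma(0)=v$ and differentiating the identity $\proj_{\Mm}\circ\gamma = \gamma$ at $0$ yields $D\proj_{\Mm}(x)v = v$. For $v \in \Pa{\tgtManif{x}{\Mm}}^\perp$, the tubular-neighbourhood property gives $\proj_{\Mm}(x+tv) = x$ for $\abs{t}$ small, hence $D\proj_{\Mm}(x)v = 0$. Therefore $D\proj_{\Mm}(x) = \proj_{\tgtManif{x}{\Mm}}$, and sharpness~\ref{PS-Sharp} gives $\tgtManif{x}{\Mm} = T_x$, so $D\proj_{\Mm}(x) = \proj_{T_x}$.

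The first assertion is then a first-order Taylor expansion of the $C^1$ map $\proj_{\Mm}$ at $x$: since $\proj_{\Mm}(x)=x$ and $\proj_{\Mm}(x')=x'$ for $x' \in \Mm \cap U$, we get $x'-x = \proj_{\Mm}(x')-\proj_{\Mm}(x) = D\proj_{\Mm}(x)(x'-x) + o\Pa{\norm{x'-x}}$, which by the previous step equals $\proj_{T_x}(x'-x) + o\Pa{\norm{x'-x}}$, as claimed. For the second assertion, if $J \in \PSFL{x}{T_x}$ then $\Mm = T_x$ is a linear subspace containing both $x$ and $x'$, so $x'-x \in T_x$ and $\proj_{T_x}(x'-x) = x'-x$ exactly, with no remainder term.

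I expect the only genuine obstacle to be the content of the first paragraph, namely justifying single-valuedness and $C^1$-regularity of $\proj_{\Mm}$ near $x$ — this is precisely where the $C^2$ (rather than merely $C^1$) regularity of $\Mm$ is consumed. It is standard material, but it is the substantive ingredient; once it is granted, the derivative computation and the Taylor expansion are entirely routine.
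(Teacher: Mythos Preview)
Your proposal is correct and follows essentially the same route as the paper: establish that $\proj_{\Mm}$ is single-valued and $C^1$ near $x$, identify its differential at $x$ with $\proj_{\tgtManif{x}{\Mm}} = \proj_{T_x}$ via sharpness, and then Taylor-expand. The only difference is cosmetic: where you give a self-contained tubular-neighbourhood / inverse-function-theorem argument for the regularity and derivative of $\proj_{\Mm}$, the paper simply invokes external references (\cite{PoliquinRockafellar2000} and \cite[Lemma~4]{LewisMalick08}) for these facts.
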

%%%%%%%%
\begin{proof}%[Proof of Lemma \ref{thm:proj_M}]
Partial smoothness implies that $\Mm$ is a $C^2$--manifold around $x$, then $\proj_{\calM}(x')$ is uniquely valued \cite{PoliquinRockafellar2000} and moreover $C^1$ near $x$ \cite[Lemma 4]{LewisMalick08}. Thus, continuous differentiability shows
\eq{
x' - x = \proj_{\Mm}(x') - \proj_{\Mm}(x) = \deriv \proj_{\Mm}(x)(x - x') + o\Pa{\norm{x - x'}} ,
} 
where $\deriv \proj_{\Mm}(x)$ is the derivative of $\proj_{\Mm}$ at $x$. By virtue of \cite[Lemma 4]{LewisMalick08} and the sharpness property of $J$, this derivative is given by
\eq{
\mathrm{D} \proj_{\Mm}(x) = \proj_{\tgtManif{x}{\Mm}} = \proj_{T_x} .
}
The case where $\calM$ is affine or linear is immediate. This concludes the proof.
\end{proof}

%%%%%%%%%%%%%%%%%%%%%%%%%%%%%%%%%%%%%%%%%%%%%%%%%%%%%%%%%%%%%%%%%%%%%%%%%%%%%%%%%%%%%%%%%%%%%%%%%%%%%%%%%%%%
\begin{proof}[Proof of Theorem~\ref{thm:linear_rate}]
$\;$\\\vspace*{-0.5cm}
\begin{enumerate}[leftmargin=0.8cm,label={\rm (\arabic{*})}, ref={\rm(\arabic{*})}]
\item 
Classical convergence results of the FB scheme, e.g. \cite{combettes2005signal}, show that $x_k$ converges to some $\xsol \in \Argmin \Phi \neq \emptyset$ by assumption \iref{A-nonempty}. Assumptions \iref{A-J}-\iref{A-F} entail that \eqref{eq:conditionnondeg} is equivalent to $0 \in \ri \Pa{\partial\Phi(\xsol)}$. Since $F \in C^{2}$ around $\xsol$, the smooth perturbation rule of partly smooth functions \cite[Corollary~4.7]{Lewis-PartlySmooth} ensures that $\Phi \in \PSF{\xsol}{\Mm}$. By definition of $x_{k+1}$, we have
\beqn
\frac{1}{\gamma_k} \Pa{G_k(x_k) - G_k(x_{k+1})} \in \partial \Phi (x_{k+1}) .
\eeqn
where $G_k = \Pa{\Id - \gamma_k \nabla F}$. By Baillon-Haddad theorem, is an $\alpha$-averaged operator, hence non-expansive, which yields
\begin{align*}
\mathrm{dist}\Pa{0,\partial \Phi(x_{k+1})}
&\leq \frac{1}{\gamma_k} \norm{G_k(x_k) - G_k(x_{k+1})} 
\leq \frac{1}{\gamma_k} \norm{x_k - x_{k+1}} .
\end{align*}
Since $\liminf \gamma_k = \underline{\gamma} > 0$, we obtain $\mathrm{dist}\Pa{0,\partial \Phi(x_{k+1})} \to 0$. Owing to assumptions \iref{A-J}-\iref{A-F}, $\Phi$ is sub-differentially continuous at every point in its domain \cite[Example~13.30]{rockafellar1998variational}, and in particular at $\xsol$ for $0$, and thus $\Phi(x_k) \to \Phi(\xsol)$. Altogether, this shows that the conditions of \cite[Theorem 5.3]{hare2004identifying} are fulfilled, whence the claim follows.
\item 
Since $\prox_{\gamma_k J}$ is firmly non-expansive, hence non-expansive, we have
\begin{align}
\norm{\xkp - \xsol}
= \norm{\prox_{\gamma_k J}G_k \xk - \prox_{\gamma_k J}G_k \xsol}
\leq \norm{G_k \xk - G_k \xsol} . \label{eq:ineq}
\end{align}
\begin{enumerate}[leftmargin=0.6cm,label={\rm (\roman{*})}, ref={\rm(\roman{*})}]
\item By virtue of Lemma~\ref{lemma:proj_M}, we have
\[
\xk - \xsol = \proj_{T}\pa{\xk - \xsol} + o\Pa{\norm{\xk-\xsol}} .
\]
This, together with local $C^2$ smoothness of $F$ and Lipschitz continuity of $\nabla F$ entails
\begin{align}
&\iprod{\xk - \xsol}{\nabla F(\xk)  -  \nabla F(\xsol)}	\nonumber \\
=~& \int_{0}^1 \iprod{\xk - \xsol}{\nabla^2 F(\xsol + t (\xk - \xsol)) (\xk - \xsol)} dt \nonumber \\
=~& \int_{0}^1 \langle \proj_T (\xk - \xsol) + o\Pa{\norm{\xk-\xsol}} , \nonumber\\&\qquad\quad \nabla^2 F(\xsol + t (\xk - \xsol))\proj_T (\xk - \xsol) + o\Pa{\norm{\xk-\xsol}} \rangle dt \nonumber \\
=~& \int_{0}^1 \iprod{\proj_T (\xk - \xsol)}{\nabla^2 F(\xsol + t (\xk - \xsol))\proj_T (\xk - \xsol)} dt + o\Pa{\norm{\xk-\xsol}^2} \nonumber \\
\geq~& \alpha \norm{\xk - \xsol}^2 + o\Pa{\norm{\xk-\xsol}^2} . \label{eq:dprodM}
\end{align}
Since \eqref{eq:conditionstrongconv} holds and $\nabla^2 F(x)$ depends continuously on $x$, there exists $\epsilon > 0$ such that $\proj_T\nabla^2 F(x)\proj_T \succeq \alpha \Id$, $\forall x \in \Ball_{\epsilon}(\xsol)$. Thus, classical development of the right hand side of \eqref{eq:ineq} yields
\begin{align}
& \norm{\xkp - \xsol}^2 \leq \norm{G_k \xk - G_k \xsol}^2
= \norm{ \pa{\xk - \xsol} - {\gamma_k} \pa{\nabla F(\xk)  -  \nabla F(\xsol) } }^2 \nonumber \\
=~& \norm{\xk - \xsol}^2 - {2\gamma_k}\iprod{\xk - \xsol}{\nabla F(\xk)  -  \nabla F(\xsol) } + {\gamma_k^2} \norm{\nabla F(\xk)  -  \nabla F(\xsol) }^2 \nonumber \\
\leq~& \norm{\xk - \xsol}^2 - {2\gamma_k\alpha}\norm{\xk - \xsol}^2 + {\gamma_k^2\beta^2} \norm{\xk - \xsol }^2 + o\Pa{\norm{\xk-\xsol}^2} \nonumber \\
=~& \Pa{ 1 - {2\alpha}\gamma_k + {\beta^2}\gamma_k^2 } \norm{ \xk -  \xsol }^2 + o\Pa{\norm{\xk-\xsol}^2} . \label{eq:Gkx_norm}
\end{align}
Taking the lim sup in this inequality gives
\begin{align}\label{eq:qlinearlimsup}
\limsup_{k\rarrow\pinf}\frac{\norm{\xkp - \xsol}^2 }{\norm{ \xk -  \xsol }^2} \leq q(\gamma_k) = 1 - {2\alpha}\gamma_k + {\beta^2}\gamma_k^2 .
\end{align}
It is clear that for $0 < \underline{\gamma} \leq \gamma \leq \bar{\gamma} < \min\Pa{2\alpha\beta^{-2},2\beta^{-1}}$, $q(\gamma) \in [0, 1[$, and $q(\gamma) \leq \widetilde{\rho}^2=\max\Ba{ q(\underline{\gamma}), q(\bar{\gamma}) }$. Inserting this in \eqref{eq:qlinearlimsup}, and using classical arguments yields the result.
%
%
%
%%%%%%%%%%%%%%%%%%%%%%%%%%%%%
\item 
We give the proof for $\Mm=T$, that for $\Mm=\xsol+T$ is similar. Since $\xk$ and $\xsol$ belong to $T$, from $\xkp = \prox_{\gamma_k J}\pa{G_k \xk}$ we have
\beqn
G_k \xk - \xkp \in \gamma_k \partial J(\xkp) 
\Longrightarrow  \xkp = \proj_{T} \Pa{ G_k \xk - \gamma_k \partial J(\xkp) } = \proj_{T} G_k \xk - \gamma_k e_{k+1} .
\eeqn
Similarly, we have $\xsol =  \proj_{T} G_k \xsol - \gamma_k e^\star$. We then arrive at
\begin{align}\label{eq:xk_xsol}
\pa{\xkp - \xsol} + \gamma_k\pa{e_{k+1}-e^\star} 
&= \proj_T \pa{G_k \xk - G_k \xsol}  \\
&= \pa{\xk - \xsol} - {\gamma_k} \Pa{ \proj_T\nabla F (\proj_T \xk) - \proj_T\nabla F(\proj_T \xsol) } . \nonumber
\end{align}
Moreover, maximal monotonicity of $\gamma_k\partial J$ gives
\[
\begin{aligned}
& \norm{\pa{\xkp-\xsol} + \gamma_k\pa{e_{k+1}-e^\star} }^2 \\
=~& \norm{ \xkp-\xsol}^2 + 2\iprod{\xkp-\xsol}{\gamma_k\pa{e_{k+1}-e^\star}} + \gamma_k\norm{e_{k+1}-e^\star}^2
\geq \norm{ \xkp-\xsol }^2 .
\end{aligned}
\]
It is straightforward to see that now, \eqref{eq:dprodM} becomes
\[
\iprod{\xk - \xsol}{\nabla F(\proj_T \xk) - \nabla F(\proj_T \xsol)} \geq \alpha \norm{\xk - \xsol}^2 .
\]
Let $\nu$ be the Lipschitz constant of $\proj_T \nabla F \proj_T$, and obviously $\nu \leq \beta$.
Developing $\norm{\proj_T \pa{G_k \xk - G_k \xsol}}^2$ similarly to \eqref{eq:Gkx_norm} we obtain
\[
\norm{\xkp - \xsol}^2 \leq \Pa{1 - {2\alpha}\gamma_k + {\nu^2}\gamma_k^2} \norm{ \xk -  \xsol }^2 = \rho_k^2 \norm{ \xk -  \xsol }^2 ,
\]
where $\rho_k \in [0,1[$ for $0 < \underline{\gamma} \leq \gamma_k \leq \bar{\gamma} < \min\Pa{2\alpha\nu^{-2},2\beta^{-1}}$. 
$\rho_k$ is minimized at $\frac{\alpha}{\nu^2}$ whenever it obeys the given upper-bound, whence the optimal rate follows after rearranging the terms,
\[
\rho^* 
= \sqrt{ q\Ppa{\frac{\alpha}{\nu^2}} }
= \sqrt{ 1 - \frac{\alpha^2 }{\nu^2} }  . \qedhere
\]
\end{enumerate}
\end{enumerate}
\end{proof}

%%%%%%%%%%%%%%%%%%%%%%%%%%%%%%%%%%%%%%%%%%%%%%%%%%%%%%%%%%%%%%%%%%%%%%%%%%%%%%%%%%%%%%%%%%%%%%%%%%%%%%%%%%%%
\begin{proof}[Proof of Theorem~\ref{thm:linear_ratepsfls}]
Arguing similarly to the proof of Theorem~\ref{thm:linear_rate}(ii), and using in addition that $e^\star=e^{\xsol}$ is locally constant, we get
\begin{align*}
\xkp - \xsol	&= \pa{\xk - \xsol} - {\gamma_k} \Pa{ \proj_T \nabla F(\proj_T \xk) - \proj_T\nabla F(\proj_T \xsol) }  \\
		&= \pa{\xk - \xsol} - {\gamma_k} \int_{0}^1 \proj_T \nabla^2 F(\xsol + t (\xk - \xsol)) \proj_T (\xk - \xsol) dt
\end{align*}
Denote $H_t=\proj_T \nabla^2 F(\xsol + t (\xk - \xsol)) \proj_T \succeq 0$. Using that $H_t$ is self-adjoint, we have
\[
\proj_{V} \xkp = \proj_{V}\xk .
\]
Since $\xk \to \xsol$, it follows that $\proj_{V}\xk = \proj_{V}\xsol$ for all $k$ sufficiently large. Observing that $\xk - \xsol = \proj_{V^\perp}(\xk - \xsol)$ for all large $k$, we get
\[
\xkp - \xsol = \xk - \xsol - {\gamma_k} \int_{0}^1 \proj_{V^\perp} H_t \proj_{V^\perp} (\xk - \xsol) dt .
\]
Observe that $V^\perp \subset T$. By definition, $B_t=H_t^{1/2} \proj_{V^\perp}$ is injective, and therefore, $\exists \alpha > 0$ such that $\norm{B_t x}^2 > \alpha \norm{x}^2$ for all $x \neq 0$ and $t \in [0,1]$. We then have
\begin{align*}
&\norm{\xkp - \xsol}^2 \\
=~& \norm{\xk - \xsol}^2 - {2\gamma_k}\int_{0}^1\iprod{\xk - \xsol}{B_t^TB_t (\xk - \xsol)} dt + {\gamma_k^2} \norm{ \proj_{V^\perp}\proj_T \Pa{\nabla F(\proj_T \xk) - \nabla F(\proj_T \xsol)}}^2  \\
=~& \norm{\xk - \xsol}^2 - {2\gamma_k}\int_{0}^1\norm{B_t(\xk - \xsol)}^2 dt + {\gamma_k^2} \norm{\proj_{V^\perp}\proj_T}^2 \norm{\nabla F(\xk)  -  \nabla F( \xsol)}^2  \\
=~& \norm{\xk - \xsol}^2 - {2\gamma_k\alpha}\norm{\xk - \xsol}^2 + {\gamma_k^2\norm{\proj_T\proj_{V^\perp}}^2}\norm{\nabla F(\xk)  -  \nabla F( \xsol)}^2  \\
\leq~& \norm{\xk - \xsol}^2 - {2\gamma_k\alpha}\norm{\xk - \xsol}^2 + {\gamma_k^2\beta^2\norm{\proj_{V^\perp}}^2} \norm{\proj_{V^\perp}(\xk - \xsol) }^2  \\
\leq~& \norm{\xk - \xsol}^2 - {2\gamma_k\alpha}\norm{\xk - \xsol}^2 + {\gamma_k^2\beta^2}\norm{ \xk - \xsol}^2  \\
=~& \Pa{ 1 - {2\alpha}\gamma_k + {\beta^2}\gamma_k^2 } \norm{ \xk -  \xsol }^2 = \rho_k^2 \norm{ \xk -  \xsol }^2 . 
\end{align*}
It is easy to see again that $\rho_k \in [0,1[$ whenever $0 < \underline{\gamma} \leq \gamma_k \leq \bar{\gamma} < \min\Pa{2\beta^{-1},2\alpha\beta^{-2}}$.
\end{proof}

%%%%%%%%%%%%%%%%%%%%%%%%%%%%%%%%%%%%%%%%%%%%%%%%%%%%%%%%%%%%%%%%%%%%%%%%%%%%%%%%%%%%%%%%%%%%%%%%%%%%%%%%%%%%
\begin{proof}[Proof of Corollary \ref{thm:quadratic-F-fb}]
$\;$\\\vspace*{-0.5cm}
\begin{enumerate}[label={\rm (\arabic{*})}, ref={\rm(\arabic{*})}]
\item 
We have $\nabla F(x) = A^*(Ax - y)$. Applying Theorem~\ref{thm:linear_rate} with $\beta=\sigma_{\max}$ and $\alpha=\sigma_{m}$, where $\alpha > 0$ owing to \eqref{eq:kerA-cap-T-0}, and using the fact that $\sigma_{m}/\sigma_{\max} \leq \sigma_{m}/\sigma_{M} \leq 1$, we get the desired claim.

\item 
We give the proof for $\Mm=T$, that for $\Mm=\xsol+T$ is similar. From \eqref{eq:xk_xsol} we have
\beqn
\pa{\xkp-\xsol} + \gamma_k\pa{e_{k+1}-e^\star} = \Pa{\Id - \gamma_k A_T^*A_T} \pa{\xk - \xsol}  ,
\eeqn
which leads to
\[
\norm{\xkp - \xsol} \leq \norm{ \Id - {\gamma_k}A^*_TA_T }_2 \norm{ \xk - \xsol }  ,
\]
Since $\Id - {\gamma_k}A^*_TA_T$ is symmetric, then
\[
\norm{ \Id - {\gamma_k}A^*_TA_T }_2 = \max\Ba{ \abs{1-{\gamma_k}\sigma_{m}} , \abs{1-{\gamma_k}\sigma_{M}} }  .
\]
Consider the following piecewise linear function of $\gamma$,
\[
\ell(\gamma) = 
\max\Ba{ \abs{1-{\gamma}\sigma_{m}} , \abs{1-{\gamma}\sigma_{M}} }
\]
we have using that $0 < \underline{\gamma} \leq \gamma \leq \bar{\gamma} < \frac{2}{\sigma_{\max}}$ and \eqref{eq:kerA-cap-T-0}, $\ell(\gamma_k) \in [0,1[$.
Therefore let
\[
\rho = \max\Ba{ \ell(\underline{\gamma}) , \ell(\bar{\gamma}) } ,
\]
we have
\beqn
\norm{\xkp - \xsol} \leq \rho \norm{\xk - \xsol} \leq \rho^{k+1} \norm{x_{0} - \xsol}  . 
\eeqn
Moreover if $\frac{2}{\sigma_{m}+\sigma_{M}} \leq \bar{\gamma}$, then the best rate can be achieved is
\[
\rho^* 
= \frac{ \sigma_{M} - \sigma_{m} }{\sigma_{M} + \sigma_{m}} 
= \frac{\varphi - 1}{\varphi + 1}  
= 1 - \frac{2}{\varphi + 1} . \qedhere
\]
\end{enumerate}
\end{proof}

\appendix
\section{Uniqueness}\label{sec:uniq}

\begin{theorem} \label{thm:uniq}
Let $\xsol$ a minimizer of \eqref{eq:minP}, where $J$ is a proper closed convex function and $F \in C^{1}(\bbR^n)$ is convex and $C^2$ around $\xsol$. Let $T:=T_{\xsol}$. Suppose that the non-degeneracy \eqref{eq:conditionnondeg} and local strong convexity assumption \eqref{eq:conditionstrongconv} hold. Then $\xsol$ is the unique solution of \eqref{eq:minP}.
\end{theorem}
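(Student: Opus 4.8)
The plan is a proof by contradiction that uses the two hypotheses in succession: the non-degeneracy condition \eqref{eq:conditionnondeg} forces any second minimizer to differ from $\xsol$ only along the tangent model subspace $T$, and the local strong convexity \eqref{eq:conditionstrongconv} then excludes this. So suppose $x'\neq\xsol$ also minimizes \eqref{eq:minP} and set $d=x'-\xsol\neq 0$. By convexity of $\Phi$, the whole segment $x_t:=\xsol+td$, $t\in[0,1]$, minimizes $\Phi$, hence $t\mapsto\Phi(x_t)$ is constant. Now $t\mapsto F(x_t)$ and $t\mapsto J(x_t)$ are both convex and their sum is affine (constant), so each is affine on $[0,1]$. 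In particular $t\mapsto F(x_t)$ is twice differentiable near $0$ with vanishing second derivative, and, using differentiability of $F$ at $\xsol$ together with $J(x_t)=\Phi(\xsol)-F(x_t)$, one obtains $J'(\xsol;d)=\lim_{t\to 0^+}\tfrac{J(x_t)-J(\xsol)}{t}=-\langle\nabla F(\xsol),d\rangle$.

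Next I would show $d\in T$. Recall the characterization $v\in\partial J(\xsol)$ iff $J'(\xsol;\cdot)\geq\langle v,\cdot\rangle$ pointwise, which gives $J'(\xsol;d)\geq\sup_{v\in\partial J(\xsol)}\langle v,d\rangle=:\sigma_{\partial J(\xsol)}(d)$. Combining this with the identity $J'(\xsol;d)=\langle-\nabla F(\xsol),d\rangle$ from the previous step and with $-\nabla F(\xsol)\in\partial J(\xsol)$ (equivalent to $0\in\partial\Phi(\xsol)$) forces $\langle-\nabla F(\xsol),d\rangle=\sigma_{\partial J(\xsol)}(d)$, i.e. $-\nabla F(\xsol)$ belongs to the exposed face of $\partial J(\xsol)$ in the direction $d$. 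But by \eqref{eq:conditionnondeg} we have $-\nabla F(\xsol)\in\ri\pa{\partial J(\xsol)}$, and a relative-interior point of a convex set cannot lie in a proper face; hence this face is all of $\partial J(\xsol)$, so $\langle\cdot,d\rangle$ is constant on $\partial J(\xsol)$, meaning $d\perp\LinHull\pa{\partial J(\xsol)}$, that is $d\in T$.

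Finally, combining $d\in T$ with the second-order information closes the argument. Since $F$ is $C^2$ near $\xsol$ and $t\mapsto F(x_t)$ is affine, its second derivative at $0$ vanishes, i.e. $\langle d,\nabla^2 F(\xsol)d\rangle=0$; but $d\in T$ yields $\langle d,\nabla^2 F(\xsol)d\rangle=\langle d,\proj_T\nabla^2 F(\xsol)\proj_T d\rangle\geq\alpha\norm{d}^2>0$ by \eqref{eq:conditionstrongconv}, a contradiction. Hence no such $x'$ exists and $\xsol$ is the unique minimizer. Note the argument uses only $T=\LinHull\pa{\partial J(\xsol)}^\perp$, not partial smoothness of $J$, consistent with the generality of the statement.

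The step I expect to be the main obstacle is the middle one: extracting $d\in T$ from non-degeneracy. It is the only place where the relative interior (rather than mere optimality $0\in\partial\Phi(\xsol)$) is genuinely used, and it relies on the elementary but slightly delicate convex-geometry fact that a relative-interior point of a convex set meets no proper exposed face. The "convex $+$ convex $=$ affine implies each affine" splitting and the concluding second-order contradiction are routine once this is in place.
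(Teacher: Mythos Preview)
Your proof is correct and follows a genuinely different route from the paper's. The paper argues that $\xsol$ is a \emph{strict local} minimizer: for every small $\delta\neq 0$ it shows $\Phi(\xsol+\delta)>\Phi(\xsol)$ by a case split on whether $\delta\in\ker\bigl(\nabla^2 F(x_t)\bigr)$ for an intermediate point $x_t$. In the kernel case it invokes the separation characterisation of the relative interior together with the identity $\ker\bigl(J'(\xsol;\cdot)\bigr)=T$ (quoted from \cite{Vaiter13}) to obtain $J'(\xsol;\delta)>-\iprod{\nabla F(\xsol)}{\delta}$ whenever $\delta\notin T$; and \eqref{eq:conditionstrongconv} forces any nonzero $\delta$ in the kernel out of $T$. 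Your contradiction via the full minimizing segment is more economical: the ``convex $+$ convex $=$ constant $\Rightarrow$ each affine'' observation singles out a \emph{single} direction $d$, and your exposed-face argument (a relative-interior point meets no proper face, hence $\langle\cdot,d\rangle$ is constant on $\partial J(\xsol)$) is precisely the self-contained geometric dual of the paper's separation step, delivering $d\in T$ without importing the lemma $\ker\bigl(J'(\xsol;\cdot)\bigr)=T$. Both proofs exploit the same dichotomy---non-degeneracy rules out directions outside $T$, restricted strong convexity rules out nonzero directions in $T$---but yours is shorter and fully self-contained; the paper's version, in exchange, yields a bona fide strict local minimum, which is slightly more than bare uniqueness.
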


%%%%%%%%%%%%%%%%%%%%%
\begin{proof}
Since $F$ is locally $C^2$ around $\xsol$, there exists $\epsilon > 0$ sufficiently small such that for any $\delta \in \Ball_\epsilon(0)$, we have
\begin{align*}
\Phi(\xsol+\delta) - \Phi(\xsol) 	
&= F(\xsol+\delta) - F(\xsol) - \iprod{\nabla F(\xsol)}{\delta} + J(\xsol+\delta) - J(\xsol) + \iprod{\nabla F(\xsol)}{\delta} \\
&= \frac{1}{2} \iprod{\delta}{\nabla^2 F(\xsol + t\delta)\delta} + J(\xsol+\delta) - J(\xsol) + \iprod{\nabla F(\xsol)}{\delta}, \quad t \in ]0,1[ ~.
\end{align*}
Let $x_t=\xsol+ t\delta \in \Ball_\epsilon(\xsol)$. Since \eqref{eq:conditionstrongconv} holds and $\nabla^2 F(x)$ depends continuously on $x \in \Ball_\epsilon(\xsol)$, we have $\proj_T\nabla^2 F(x)\proj_T \succeq \alpha \Id$ for any such $x$. This holds in particular at $x_t$. We then distinguish two cases.
\begin{itemize}%[label=(\arabic{*})]
\item $\delta \notin \ker(\nabla^2 F(x_t))$. In this case, it is clear that
\[
\Phi(\xsol+\delta) - \Phi(\xsol) \geq \frac{1}{2} \iprod{\delta}{\nabla^2 F(x_t)\delta}  \geq \alpha/2 \norm{\delta}^2 > 0
\]
since $F$ is convex and locally $C^2$, and $J$ is convex with $-\nabla F(\xsol) \in \partial J(\xsol)$.
\item $\delta \in \ker(\nabla^2 F(x_t)) \setminus \ens{0}$. Since $J$ is a proper closed convex function, it is sub-differentially regular at $\xsol$. Moreover $\partial J(\xsol) \neq \emptyset$ ($-\nabla F(\xsol)$ is in it), and thus the directional derivative $J'(\xsol,\cdot)$ is proper and closed, and it is the support of $\partial J(\xsol)$ \cite[Theorem~8.30]{rockafellar1998variational}. It then follows from the separation theorem \cite[Theorem V.2.2.3]{hiriart1996convex} that
\begin{equation*}
-\nabla F(\xsol) \in \ri(\partial J(\xsol)) \iff J'(\xsol,\delta) > -\iprod{\nabla F(\xsol)}{\delta} \quad \forall \delta \text{ s.t. } J'(\xsol;\delta) + J'(\xsol;-\delta) > 0 .    
\end{equation*}
As $\ker(J'(\xsol;\cdot)) = \T$ \cite[Proposition~3(iii) and Lemma~10]{Vaiter13}, and in view of \eqref{eq:conditionstrongconv}, we get
\begin{align*}
-\nabla F(\xsol) \in \ri(\partial J(\xsol)) 	
&\iff J'(\xsol;\delta) > -\iprod{\nabla F(\xsol)}{\delta} \quad \forall \delta \notin T \\
&\Longrightarrow J'(\xsol;\delta) > -\iprod{\nabla F(\xsol)}{\delta} \quad \forall \delta \in \ker(\nabla^2 F(x_t)) \setminus \ens{0}.
\end{align*}
Combining this with classical properties of the directional derivative of a convex function yields
\begin{align*}
\Phi(\xsol+\delta) - \Phi(\xsol) 
&= J(\xsol+\delta) - J(\xsol) + \iprod{\nabla F(\xsol)}{\delta} \\
&\geq J'(\xsol;\delta) + \iprod{\nabla F(\xsol)}{\delta} > 0 ,
\end{align*}
which concludes the proof. \qedhere
\end{itemize}
\end{proof}

\endgroup

%\newpage
%%%%%%%%%%%%%%%%%%%%%%%%%%%%%%%%%%%%%%
\small
\bibliographystyle{plain}
\bibliography{fb-rate}

\end{document}